\newtheorem{theorem}{Theorem}[section]
\newtheorem{lemma}[theorem]{Lemma}
\newtheorem{proposition}[theorem]{Proposition}
\newtheorem{assumption}[theorem]{Assumption}
\newenvironment{proof}{\textbf{Proof:}}{\hfill$\square$}
\newcommand{\y}{\mathbf{y}}
\newcommand{\Pm}{\mathcal{P}}
\newcommand{\R}{\mathbb{R}}
\newcommand{\Rd}{\mathbb{R}^d}
\newcommand{\tc}{\tilde{c}}
\newcommand{\tT}{\tilde{T}}
\newcommand{\tx}{\tilde{x}}
\newcommand{\ty}{\tilde{y}}
\begin{document}

\begin{frontmatter}

\title{Optimal Transport of Linear Systems over Equilibrium Measures} 

\thanks[footnoteinfo]{This paper was not presented at any IFAC 
meeting.}

\author[a]{Karthik Elamvazhuthi}\ead{karthike@ucr.edu},    
\author[b]{Matt Jacobs}\ead{majaco@ucsb.edu}               

\address[a]{Department of Mechanical Engineering, University of California, Riverside}  
\address[b]{Department of Mathematics, University of California, Santa Barbara}             

\begin{keyword}                           
Optimal Transport; Linear Time Invariant Systems; Distribution Control.               
\end{keyword}                             

\begin{abstract}                          
We consider the optimal transport problem over convex costs arising from optimal control of linear time-invariant(LTI) systems when the initial and target measures are assumed to be supported on the set of equilibrium points of the LTI system. In this case, the probability measures are singular with respect to the Lebesgue measure, thus not considered in previous results on optimal transport of linear systems. This problem is motivated by applications, such as robotics, where the initial and target configurations of robots, represented by measures, are in equilibrium or stationary. Despite the singular nature of the measures, for many cases of practical interest, we show that the Monge problem has a solution by applying classical optimal transport results. Moreover, the problem is computationally tractable even if the state space of the LTI system is moderately high in dimension, provided the equilibrium set lives in a low dimensional space. In fact, for an important subclass of linear quadratic problems, such as control of the double integrator with linear quadratic cost, the optimal transport map happens to coincide with that of the Euclidean cost. We demonstrate our results by computing the optimal transport map for the minimum energy cost for a two dimensional double integrator, despite the fact that the state space is four dimensional due to position and velocity variables. 
\end{abstract}

\end{frontmatter}

\section{Introduction}

The optimal transportion problem is the study of transferring resources from one configuration to another, where resources are modeled as probability distributions. Motivated initially by problems in economics, it has since been a subject of intense research in the mathematics literature \cite{villani2009optimal,santambrogio2015optimal} and has found applications in a wide range of areas such as computer vision \cite{rubner2000earth}, signal processing \cite{kolouri2017optimal} and multi-robot control \cite{kabir2021efficient}. Classical formulations of optimal transport problems consider situations where the cost of the particle is defined explicitly between any two points in space. However, this excludes a large class of costs arising from control theory where the cost itself is a solution to an optimal control problem. Towards this end, there has been some recent effort on considering optimal transport problems for optimal control costs \cite{agrachev2009optimal,hindawi2011mass,chen2016optimal,de2021discrete,elamvazhuthi2018optimal,elamvazhuthi2023dynamical}. See also related work on control of densities of stochastic systems \cite{karny1996towards,fleig2017optimal,pakniyat2022convex}, Schrodinger bridges \cite{ito2023maximum,chen2016relation}, covariance steering of control systems \cite{okamoto2018optimal,balci2023covariance} and stabilization of probability measures \cite{biswal2021decentralized}.

Existing works on optimal transport of control systems have focused on the case when the source and target measures are absolutely continuous with respect to the Lebesgue measure, where it has been shown that techniques from optimal transport theory can be combined with optimal control principles to construct transport maps. However, there exists a distinctive class of problems within this domain that has garnered less attention but holds significance in various practical scenarios – those involving probability measures supported on the set of equilibrium points.

This paper introduces the optimal transport problem over convex costs for linear time invariant control systems with initial and target measures assumed to be supported exclusively on equilibrium points. Unlike typical scenarios, this particular problem deals with measures that are singular with respect to the Lebesgue measure. This singular nature of the measures has, until now, been largely unexplored within the context of optimal transport for linear systems.

The problem that we consider is motivated by practical applications in fields such as robotics, where often swarms of robots are required to be transported from one equilibrium configuration to another. This implies that the proportion of robots in a state of zero velocity in initial and final configuration, is always one. Despite the singular nature of the measures involved, we show that, for many cases of practical interest, solutions to the {\it Monge problem}, that is the existence of deterministic maps, can be obtained by leveraging classical optimal transport results. Furthermore, the paper demonstrates that not only can solutions be shown to exist, but they are also computationally tractable, even when dealing with moderately high-dimensional state spaces. The key lies in the dimensionality of the equilibrium set, which, when residing in a low-dimensional subspace, allows for efficient computation of the optimal transport map.

To emphasize the practical relevance of these findings, the paper examines a specific subclass of linear quadratic problems, notably a class of control systems decoupled along different dimensions of the state space. Remarkably, in this particular case, the optimal transport map aligns with that of the Euclidean cost on the equilibrium space. A special case of this example is the $N-$dimensional double integrator. For the $2$-D case we compute the transport map using the Back and Forth method \cite{jacobs2020fast}, a fast method for optimal transport that can compute deterministic maps, and visualize the trajectory of the probability distribution by interpolating the map using optimal control trajectories.

The paper is organized as follows. In Section \ref{sec:bkg}, we provide a brief background and motivation behind the problem. In addition, we present the problem that we analyze in this paper. In Section \ref{sec:asys}, we state and prove our main result (Theorem \ref{thm:mr}) on the existence of the solutions to the Monge formulation of the optimal transport problem for a large class of convex costs. In Section \ref{sec:eg}, we present some example optimal transport problems with LQ costs that can be computed using existing optimal transport algorithms, even if the state-space is of higher dimension than 3.

\section{Background and Motivation}
\label{sec:bkg}
In this section, we give a brief background and motivation behind the problem. 

Let $\mathcal{P}_2(\mathbb{R}^d)$ denote the set of Borel probability measures on $\mathbb{R}^d$ with finite second moment: $\int_{\Omega} |x|^2d\mu(x)~<~\infty$. For a given Borel map $T : \mathbb{R}^d \rightarrow  \mathbb{R}^d$ we will denote by $T_{\#}$ the corresponding pushforward map, which maps any measure $\mu $ to a measure $T_{\#}\mu$, where $T_{\#}\mu$ is the measure defined by
	\begin{equation}
	(T_\# \mu)(\Omega) = \mu(T^{-1}(\Omega)), 
	\end{equation}
	for all Borel measurable sets $\Omega \subseteq \mathbb{R}^d$. 
 The support of a measure $\mu \in \mathcal{P}(\mathbb{R}^d)$ will be denoted by ${\rm supp} ~ \mu = \lbrace \mathbf{x} \in \mathbb{R}^d; ~\mathbf{x} \in N_\mathbf{x} ~  {\rm implies~that} ~ \mu(N_\mathbf{x}) >0, ~{\rm where~} N_\mathbf{x} ~ {\rm  is~a~ neighborhood ~ of~} \mathbf{x}  \rbrace$. 
 
The optimal transport problem is the following
 \begin{eqnarray}
\label{eq:OTprb}
\inf_{T} \int_{x \in \mathbb{R}^d} c(x,T(x))dx \\
s.t. ~~ T_{\#} \mu = \nu
\end{eqnarray}
This problem has been well studied in the literature \cite{villani2009optimal,santambrogio2015optimal}. In an engineering context, $\mu, \nu$ could represent the position of swarm of robots or network of power systems. The well-posedeness of the problem when the costs arise form optimal control problems when the associated control system is linear \cite{hindawi2011mass,chen2016optimal} and system is nonlinear \cite{agrachev2009optimal,elamvazhuthi2023dynamical}. 

In this paper, we will be considering the case when the control system is a linear time invariant (LTI) system. Let $A\in \mathbb{R}^{d \times d}$, $B \in \mathbb{R}^{d \times m}$. The running cost is denoted by the function $ L: \mathbb{R}^d \times \mathbb{R}^m \rightarrow \mathbb{R} $. Using these, we define the cost function $c: \mathbb{R}^d \times \mathbb{R}^d \rightarrow \mathbb{R}$ given by
\begin{eqnarray}
\label{eq:occost}
c(x,y) = \inf_{(\gamma,u) \in \Gamma \times \mathcal{U} } \int_0^1 L(\gamma(t),u(t))dt  \\
\text{s.t.} ~~~\frac{d\gamma}{dt} = A\gamma(t) +Bu(t); \label{eq:lti}\\
\gamma(0)=x,~~\gamma(1) = y \label{eq:edp}.
\end{eqnarray}
where $\Gamma = AC([0,T];\R^d) $ is the set of absolutely continuous curves on $\R^d$ and $\mathcal{U} = L^2(0,1;\mathbb{R}^d)$ is the set of square integrable control inputs. We assume that the system is controllable, so that the optimization constraints with end points $\gamma(0)=x~~\gamma(1) = y$ is feasible for every $x,y \in \mathbb{R}^d$.

An example of this instance is the case when, the cost is given by the minimum energy control for the \textbf{2-dimensional double integrator}, where the control is the acceleration of a particle moving in $2$-dimensional Euclidean space.
\begin{eqnarray}
c(x,y) = \inf_{(\gamma,u) \in \Gamma \times \mathcal{U} } \int_0^1 |u(t)|^2dt  \\
\text{s.t.} ~~~\frac{d\gamma}{dt} = 
\gamma(t)
\begin{bmatrix}
0 & 1 & 0 & 0 \\
0 & 0 & 0 & 0 \\
0 & 0 & 0 & 1 \\
0 & 0 & 0 & 0 \\
\end{bmatrix}
+\begin{bmatrix}
0 & 0 \\
1 & 0 \\
0 & 0 \\
0 & 1 
\end{bmatrix} u(t)
\\
\gamma(0)=x,~~\gamma(1) = y 
\end{eqnarray}
where $\gamma(t) \in \mathbb{R}^4 $ and $u(t) \in \mathbb{R}^2$  for all $t$.
The optimal transport map for the above cost is challenging to compute for general probability measures since the state space of the system is four dimensional. The numerical solvability of the problem requires that $\mu, \nu$ is absolutely are absolutely continuous with respect to the Lebesgue measure on $\mathbb{R}^4$. 

On the other hand, in many practical situations, such as in robotics, it is often the case that particles are at state of equilibrium in both initial and final configurations. For numerical solvability and practical considerations of transporting particles from one equilibrium configuration to another, we consider a alternative set of probability measures. We define the set of equilibrium points, 
\[E : = \{\gamma \in \mathbb{R}^d ;A \gamma =0\} \big \}. \]
In the case of the double integrator above, this set is the set of all zero velocity states
\[E_{di} : = \{x\in \mathbb{R}^4 ; \exists p_1, p_2 \in \mathbb{R}; x = [p_1~0~p_2~0]\} \big \}, \]
Using the set $E$ we define the the the set of \textbf{equilibrium measures},  $\mathcal{P}^{eq}_2(\mathbb{R}^d)$, given by
\[\mathcal{P}^{eq}_2(\mathbb{R}^d) = \big \{\mu \in \mathcal{P}_2(\mathbb{R}^d); {\rm supp} ~\mu \subseteq E \} \]
We will consider the optimal transport problem for measures $\mu, \nu \in \mathcal{P}^{eq}_2(\mathbb{R}^d)$, thus potentially reducing the dimension of the problem. Note that since the measure $\mathcal{P}^{eq}_2(\mathbb{R}^d)$ are non-singular, the results of \cite{hindawi2011mass,chen2016optimal} do not apply. \textbf{The goal of the paper will be to provide sufficient conditions for the existence of the optimal transport map when the source and target measure are equilibrium measures}. 

Before we begin our analysis we state some assumptions on the cost function $L$. Toward this end, let $P_E :\mathbb{R}^d \rightarrow \mathbb{R}^d$ be the projection map from $\mathbb{R}^d$ to $\mathbb{R}^d$, onto the set $E$. Let $P_E^{\perp}$ be the projection map onto the orthogonal complement of $E$.

\begin{assumption}
\label{asmp:asmp1}
The cost function $L:\mathbb{R}^d \times \mathbb{R}^d \rightarrow \mathbb{R}^d$ satisfies the following properties
\begin{enumerate}
\item There exists $\theta$ such that $L(\gamma,\cdot) \geq \theta |u|^2$ for all $ (\gamma,u) \in \mathbb{R}^d \times \mathbb{R}^m$
\item $L $ is a convex function  that is uniformly strongly convex with respect to the control variable. That is, there exists $\lambda>0$ such that 
\begin{eqnarray}
\ & L(\alpha \gamma +(1-\alpha)\eta,\alpha u + (1-\alpha)v)) \leq  \nonumber
\\ & \alpha L(\gamma,u) +(1-\alpha )L(\eta,v) -\frac{\alpha(1-\alpha)\lambda}{2}\|u-v\|^2_2 \nonumber
\end{eqnarray}
for all $\gamma,\eta \in \mathbb{R}^d$, all $u,v \in \mathbb{R}^m$ and all $\alpha \in (0,1)$
\item $L(\gamma,u) = L(P_{E_{\perp}}\gamma,u)$ for all $(\gamma,u) \in \mathbb{R}^d \times \mathbb{R}^m$
\end{enumerate}
\end{assumption}

The first three assumption are standard assumptions on regularity and convexity of the cost functional \cite{fleming2012deterministic}. Assumption 4 implies that the cost acts on the state, only through the non-equilibrium variables. For example, in the case of the double integrator, this implies that the state dependence of the cost is only through the velocity variables.

\section{Analysis}
\label{sec:asys}
In section, we present the analysis for the Monge problem presented in the previous section. First, we note the classical result on the existence of solutions of optimal transport maps. The strategy of the paper will be to verify that the optimal control cost on the equilibrium set satisfies all these assumptions. 
\begin{theorem} \cite{santambrogio2015optimal}[Theorem 1.17]
\label{thm:clot}
Let $\tilde{c}: \mathbb{R}^d \times \mathbb{R}^d \rightarrow \mathbb{R}$ be such that $\tilde{c}(x,y) = \tilde{h}(x-y)$ with $\tilde{h} :\mathbb{R}^d \rightarrow \mathbb{R}$ strictly convex. Assume $\mu, \nu \in \mathcal{P}_2(\Omega)$ have compact supports and are absolutely continuous with respect to the Lebesgue measure. Then there exists a unique solution to the optimal transport problem.  
\begin{eqnarray}
\inf_{T} \int_{x \in \mathbb{R}^d} \tilde{c}(x,T(x))dx \\
s.t. ~~ T_{\#} \mu = \nu
\end{eqnarray}
\end{theorem}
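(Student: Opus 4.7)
The approach I would take is the classical Brenier--Gangbo--McCann strategy for transport under a ``twisted'' cost: relax to Kantorovich, extract a dual potential, and invert the first-order optimality relation using strict convexity of $\tilde{h}$.

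First, I would establish existence of a Kantorovich minimizer $\pi^\ast \in \Pi(\mu,\nu)$ by the direct method. Since $\mathrm{supp}(\mu)$ and $\mathrm{supp}(\nu)$ are compact, the set $\Pi(\mu,\nu)$ is weak-$\ast$ compact and the functional $\pi \mapsto \int \tilde{h}(x-y)\,d\pi$ is continuous, yielding a minimizer. Kantorovich duality simultaneously supplies a $\tilde{c}$-concave potential $\varphi$ with $\tilde{c}$-transform $\varphi^{\tilde{c}}$ such that $\varphi(x) + \varphi^{\tilde{c}}(y) \leq \tilde{h}(x-y)$ everywhere and equality on $\mathrm{supp}(\pi^\ast)$.

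Second, I would show $\pi^\ast$ is concentrated on a graph. Fix $(x_0,y_0) \in \mathrm{supp}(\pi^\ast)$; then the map $x \mapsto \tilde{h}(x-y_0) - \varphi(x)$ attains a minimum at $x_0$. At any point where $\varphi$ is differentiable this forces the first-order condition
\[
\nabla \varphi(x_0) = \nabla \tilde{h}(x_0 - y_0).
\]
Strict convexity of $\tilde{h}$ makes $\nabla \tilde{h}$ injective on its domain of differentiability, so $y_0$ is uniquely recovered from $x_0$ as $T(x_0) := x_0 - (\nabla \tilde{h})^{-1}(\nabla \varphi(x_0))$. Compactness of the supports together with $\tilde{c}$-concavity yields local Lipschitz regularity of $\varphi$, hence Lebesgue-a.e.\ differentiability by Rademacher's theorem. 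Absolute continuity $\mu \ll \mathcal{L}^d$ upgrades this to $\mu$-a.e.\ differentiability, so $\pi^\ast = (\mathrm{Id},T)_\# \mu$ and the constraint $T_\#\mu = \nu$ follows from the marginal property of $\pi^\ast$. Uniqueness of $T$ (and hence of the Monge solution) is immediate from the fact that any optimal plan must satisfy the same relation $\mu$-a.e.

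The step I expect to be the main obstacle is the differentiability argument. Strict convexity of $\tilde{h}$ alone does not guarantee $\tilde{h}$ is differentiable everywhere, so one must verify that the (sub)gradient $\partial \varphi(x)$ intersects the domain of $\nabla \tilde{h}$ in a single point for $\mu$-a.e.\ $x$, ensuring $T$ is well defined as a genuine map rather than a multivalued correspondence. This single-valued selection --- which is the real content of the theorem beyond the soft compactness--duality part --- is precisely what the compact-support, $\tilde{c}$-concavity, and absolute continuity hypotheses combine to deliver, and the full verification is carried out in \cite{santambrogio2015optimal}.
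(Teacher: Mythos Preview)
The paper does not prove this theorem at all: it is quoted verbatim from \cite{santambrogio2015optimal} as a classical black-box result and then invoked in the proof of Theorem~\ref{thm:mr}. Your sketch is the standard Gangbo--McCann argument (Kantorovich relaxation, duality, $\tilde c$-concave potential, Rademacher, invert $\nabla\tilde h$ via strict convexity), which is precisely the proof given in the cited reference, so there is nothing to compare beyond noting that you have correctly reconstructed the source argument.
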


An additional result that we will leverage from the optimal control literature is the following one on the existence of solutions to the optimal control problem. This will ensure that the point to point optimal control problem \eqref{eq:occost} is well-posed, which is a prerequisite to the well-posedness of the optimal transport problem \eqref{eq:OTprb}.

\begin{theorem}
\cite{fleming2012deterministic}[Chapter III, Corollary 4.1]
Given Assumption \ref{asmp:asmp1}, for each $x,y \in \mathbb{R}^d$, the cost function is well defined and there exists an optimal pair $(\gamma, u) \in \Gamma \times \mathcal{U}$ that achieves the infimum.
\end{theorem}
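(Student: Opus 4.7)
The plan is to apply the direct method of the calculus of variations. Fix $x, y \in \mathbb{R}^d$. Controllability of $(A,B)$ ensures that the constraint set in \eqref{eq:occost}--\eqref{eq:edp} is nonempty, so the infimum is finite; combined with the nonnegativity of $L$ implied by Assumption \ref{asmp:asmp1}(1), this already shows $c(x,y)$ is well defined.

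Pick a minimizing sequence $(\gamma_n, u_n) \in \Gamma \times \mathcal{U}$. By Assumption \ref{asmp:asmp1}(1), $\theta \int_0^1 |u_n(t)|^2 dt \leq \int_0^1 L(\gamma_n(t), u_n(t)) dt$, and the right-hand side is bounded along the minimizing sequence, so $\{u_n\}$ is bounded in $L^2(0,1;\mathbb{R}^m)$. Since $\gamma_n$ solves the linear ODE $\dot\gamma_n = A\gamma_n + B u_n$ with $\gamma_n(0) = x$, Gr\"onwall's inequality gives a uniform $H^1(0,1;\mathbb{R}^d)$ bound on $\{\gamma_n\}$. Extracting a subsequence (not relabeled) I may assume $u_n \rightharpoonup u^\ast$ weakly in $L^2$ and $\gamma_n \to \gamma^\ast$ uniformly on $[0,1]$ by Arzel\`a--Ascoli. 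Passing to the limit in the integrated form of the dynamics, using weak convergence of $u_n$ against the bounded linear map $B$ and uniform convergence of $A\gamma_n$, I conclude that $(\gamma^\ast, u^\ast)$ is admissible with $\gamma^\ast(0) = x$ and $\gamma^\ast(1) = y$.

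It remains to establish lower semicontinuity of $(\gamma, u) \mapsto \int_0^1 L(\gamma(t), u(t)) dt$ under these modes of convergence. This follows from joint convexity of $L$ in $(\gamma, u)$ (Assumption \ref{asmp:asmp1}(2)) combined with its continuity in the state argument, via Ioffe's lower semicontinuity theorem (alternatively, via Mazur's lemma applied to the $u$-variable together with the uniform convergence of $\gamma_n$). Once the inequality $\int_0^1 L(\gamma^\ast, u^\ast) dt \leq \liminf_n \int_0^1 L(\gamma_n, u_n) dt$ is in hand, $(\gamma^\ast, u^\ast)$ attains the infimum.

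The main obstacle is precisely this last step. Weak convergence of $u_n$ alone does not preserve the value of the nonlinear integral, so one must exploit both the convexity of $L$ in $u$ (to absorb the weak-but-not-strong convergence of the control) and the uniform convergence of $\gamma_n$ (to handle the nonlinear state dependence) simultaneously; Ioffe's theorem is the cleanest packaging of exactly that combination.
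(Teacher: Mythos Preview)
Your argument is correct and is exactly the standard direct-method proof one would expect: coercivity in $u$ from Assumption~\ref{asmp:asmp1}(1) bounds the minimizing controls in $L^2$, Gr\"onwall upgrades this to a uniform $H^1$ bound on the trajectories, weak/uniform compactness plus linearity of the dynamics yields an admissible limit pair, and convexity of $L$ in $u$ (together with the continuity of $L$, which follows automatically since a finite convex function on all of $\mathbb{R}^d\times\mathbb{R}^m$ is continuous) delivers weak lower semicontinuity of the functional.

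The only point of comparison is that the paper does not actually prove this statement: it is invoked as a black-box citation of \cite{fleming2012deterministic}, Chapter~III, Corollary~4.1. Your proposal supplies the argument behind that citation, and the route you take---coercivity, compactness, lower semicontinuity via convexity in the control variable---is precisely the one used in that reference. So there is no methodological divergence to discuss; you have simply unpacked what the paper chose to outsource.
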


Next we note a result that if one starts and ends at an equilibrium set, the negative of the control achieves the same state transition in the opposite direction, from $y$ to $x$.
\begin{lemma}
Suppose $x,y \in E$ and $u \in \mathcal{U}$ is such that 
\begin{eqnarray}
\frac{d\gamma}{dt} = A\gamma(t) +Bu(t); \\
\gamma(0)=x,~~\gamma(1) = y .
\end{eqnarray}
then,
\begin{eqnarray}
\label{eq:rev}
\frac{d\tilde{\gamma}}{dt} = A\tilde{\gamma}(t) -Bu(t); \\
\tilde{\gamma}(0)=x,~~\tilde{\gamma}(1) = y .
\end{eqnarray}
where $\tilde{\gamma} (t) =-\gamma(t)+x+y$.
\end{lemma}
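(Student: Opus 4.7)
The plan is a direct verification by differentiation, with the equilibrium assumption $x,y \in E$ entering as the single nontrivial ingredient. First I would simply differentiate $\tilde{\gamma}(t) = -\gamma(t) + x + y$ in $t$, which yields $\frac{d\tilde{\gamma}}{dt} = -\frac{d\gamma}{dt}$, and then substitute the original dynamics $\frac{d\gamma}{dt} = A\gamma(t) + Bu(t)$ to get $\frac{d\tilde{\gamma}}{dt} = -A\gamma(t) - Bu(t)$.

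Next I would insert a cleverly chosen zero: since $x,y \in E$ we have $Ax = Ay = 0$, hence $A(x+y) = 0$. Adding this zero to the right-hand side gives
\begin{equation}
-A\gamma(t) - Bu(t) = A\bigl(-\gamma(t) + x + y\bigr) - Bu(t) = A\tilde{\gamma}(t) - Bu(t),
\end{equation}
which is exactly the claimed ODE for $\tilde{\gamma}$. This is the one place the equilibrium hypothesis is actually used; without it the affine term $A(x+y)$ would not vanish and the reversed trajectory would fail to satisfy an LTI equation with the same matrices $A,B$.

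Finally I would check the boundary conditions by direct substitution: $\tilde{\gamma}(0) = -\gamma(0) + x + y = -x + x + y = y$ and $\tilde{\gamma}(1) = -\gamma(1) + x + y = -y + x + y = x$. I note that the statement as printed lists $\tilde{\gamma}(0)=x,\ \tilde{\gamma}(1)=y$, but the construction manifestly swaps the endpoints, so I would read this as a typographical slip and record the corrected endpoints $\tilde{\gamma}(0)=y,\ \tilde{\gamma}(1)=x$, which is also what makes the lemma meaningful (it asserts that $-u$ drives the system from $y$ back to $x$ along a trajectory that remains in the affine reflection of $\gamma$). There is no real obstacle; the entire content of the lemma is the observation that $A$ annihilates $x+y$.
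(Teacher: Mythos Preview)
Your proof is correct and is exactly the approach the paper takes: a direct verification by plugging the definition of $\tilde{\gamma}$ into the claimed ODE, with the equilibrium condition $A(x+y)=0$ doing the work. Your observation about the swapped endpoints is also correct---the paper's own preamble to the lemma says the negative control achieves the transition ``from $y$ to $x$,'' so the printed boundary conditions are indeed a typo.
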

\begin{proof}
The equality is easy to see by plugging in the expression  $\tilde{\gamma} (t) =-\gamma(t)+x+y$ in equation \eqref{eq:rev},
\end{proof}

Next, we establish the convexity of the cost function $c$. Moreover, the previous Lemma will help us to show that, on the equilibrium set, the cost is dependent only on the difference between the initial and the target point.

\begin{proposition}
\label{prop:cvx}
Given Assumption \ref{asmp:asmp1}, there exists $h:E \rightarrow \mathbb{R}$ such that $c(x,y) = h(x-y)$ for all $x ,y \in E$.

Moreover, $h$ is strictly convex. 
\end{proposition}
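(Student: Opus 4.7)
The plan is to prove the two claims separately. For the first claim (that $c(x,y) = h(x-y)$ on $E$), I would exploit translation invariance of the dynamics along the equilibrium subspace $E$, combined with Assumption~\ref{asmp:asmp1}(3) which makes the running cost blind to the $E$-component of the state. Concretely, fix $x,y \in E$, let $(\gamma,u)$ be any admissible pair for $(x,y)$, and define $\tilde\gamma(t) := \gamma(t) - x$. Since $Ax = 0$, we get $\dot{\tilde\gamma} = A\tilde\gamma + Bu$ with $\tilde\gamma(0)=0$ and $\tilde\gamma(1)=y-x$, so $\tilde\gamma$ is admissible for the pair $(0, y-x)$. Moreover $P_{E^\perp}\tilde\gamma(t) = P_{E^\perp}\gamma(t) - P_{E^\perp}x = P_{E^\perp}\gamma(t)$, so by Assumption~\ref{asmp:asmp1}(3)
\begin{equation*}
L(\gamma(t),u(t)) = L(P_{E^\perp}\gamma(t),u(t)) = L(\tilde\gamma(t),u(t)).
\end{equation*}
This correspondence between admissible pairs is clearly a bijection, so taking infima gives $c(x,y) = c(0,y-x)$. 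Since $E$ is a subspace (the kernel of $A$), $y-x \in E$, and setting $h(z) := c(0,z)$ for $z \in E$ yields the desired representation.

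For the second claim (strict convexity of $h$), I would use a standard convex-combination argument that exploits the linearity of the dynamics together with Assumption~\ref{asmp:asmp1}(2). Fix $z_1, z_2 \in E$ with $z_1 \neq z_2$ and $\alpha \in (0,1)$. Invoking Theorem 3.2 (Fleming--Rishel), pick optimal control--trajectory pairs $(\gamma_i, u_i)$ for $h(z_i) = c(0, z_i)$. Define $u_\alpha := \alpha u_1 + (1-\alpha)u_2$ and $\gamma_\alpha := \alpha \gamma_1 + (1-\alpha)\gamma_2$. By linearity of the ODE, $(\gamma_\alpha, u_\alpha)$ is an admissible pair joining $0$ to $\alpha z_1 + (1-\alpha)z_2$. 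Applying the strong convexity bound pointwise and integrating,
\begin{align*}
h(\alpha z_1 + (1-\alpha)z_2) &\le \int_0^1 L(\gamma_\alpha, u_\alpha)\,dt \\
&\le \alpha h(z_1) + (1-\alpha)h(z_2) \\
&\quad - \tfrac{\alpha(1-\alpha)\lambda}{2}\int_0^1 \|u_1 - u_2\|^2\,dt.
\end{align*}

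To close the argument, I need the penalty term to be strictly positive, i.e. $u_1 \neq u_2$ in $L^2$. This is the one place that requires care, and I view it as the only real obstacle: if $u_1 = u_2$ a.e., then since both trajectories start at $0$ and satisfy the same linear ODE, we would have $\gamma_1 \equiv \gamma_2$, forcing $z_1 = \gamma_1(1) = \gamma_2(1) = z_2$, contradicting $z_1 \neq z_2$. Hence $\int_0^1\|u_1-u_2\|^2\,dt > 0$, and the strict inequality $h(\alpha z_1 + (1-\alpha)z_2) < \alpha h(z_1) + (1-\alpha)h(z_2)$ follows.
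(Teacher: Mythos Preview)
Your proof is correct and follows essentially the same strategy as the paper: translation invariance along $E=\ker A$ combined with Assumption~\ref{asmp:asmp1}(3) for the first part, and a convex-combination argument using strong convexity in $u$ for the second. Your choice to normalize both problems to start at $0$ makes the ``$u_1\neq u_2$'' step slightly cleaner than the paper's variation-of-constants computation; the only cosmetic slip is that your definition $h(z):=c(0,z)$ gives $c(x,y)=h(y-x)$ rather than $h(x-y)$, which is fixed by setting $h(z):=c(0,-z)$ (strict convexity is preserved under this sign change).
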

\begin{proof}
First we establish the existence of the function $h$. Let $x_1,y_1,x_2,y_2 \in E$ be such that $x_1-y_1 = x_2-y_2$. Suppose $(\gamma^o, u^o) \in \mathcal{U}$ is the optimal solution for the Problem \ref{eq:occost} with $x= x_1$ and $y=y_1$. Then consider the trajectory $\gamma :[0,1] \rightarrow \mathbb{R}^d$, given by $\gamma(t) = \gamma^o(t) -x_1 +x_2$ for almost every $t \in [0,1]$.
\begin{align*}
\int_0^1 L(\gamma^o(t), u^o(t))dt & = \int_0^1 L(P^{\perp}_E\gamma(t), u^o(t))dt  \\
&= \int_0^1 L(\gamma(t), u^o(t))dt  
\end{align*}
Moreover, the pair $\gamma \in \Gamma$ is a solution of the linear system  \eqref{eq:lti}-\eqref{eq:edp} for control $u =u^o$ and $\gamma(0) = x_2, \gamma(1)=y_2$ since,
\begin{eqnarray}
 \frac{d\gamma}{dt} =\frac{d\gamma^o}{dt} & =A\gamma^o(t) +Bu^o(t) \\
 &= A\gamma(t) +Bu^o(t) ; 
\end{eqnarray}
This shows that $c(x,y)$ only depends on $x,y$ and hence, there exists a function $h:E \rightarrow \R^d$ such that $c(x,y) = h(x-y)$. 

Next, we prove that $h$ is strictly convex on $E$. Let $x_0,y_0,x_1,y_1 \in E$. Define $z_0 = x_0-y_0$ and $z_1 = x_1-y_1$. Assume that $z_0 \neq z_1$. Let $(\gamma_0,u_0), (\gamma_1,u_1) \in \Gamma \times \mathcal{U}$ be the optimal solution for the problem \eqref{eq:occost}-\eqref{eq:edp}, for $(x,y) = (x_0,y_0)$ and $(x,y) = (x_1,y_1)$, respectively. Next, we define the sub-optimal convex combination of $(\gamma_1,u_1)$ and $(\gamma_2,u_2)$ by
\[(\gamma_\alpha,u_\alpha) = ((1-\alpha) \gamma_0+\alpha \gamma_1,(1-\alpha) u_0+\alpha u_1)  \]
Note that $u_\alpha$ is a control corresponding to the trajectory $\gamma_\alpha$.
From the assumption on the convexity of $L$, we can conclude that,
\begin{align}
\label{eq:cvtest}
 L(\gamma_\alpha (t), u_\alpha (t) ) 
\leq &  \alpha L(\gamma_1 (t), u_1 (t) )\nonumber  \\ 
& + (1-\alpha) L(\gamma_2 (t), u_2 (t) ) \nonumber \\
& + \frac{\alpha(1-\alpha)\alpha}{2}\|u_1(t)-u_2(t)\|^2_2 
\end{align}
for almost every $t \in [0,1]$ and all $\alpha \in (0,1)$.
Since $z_1 \neq z_2$, $u_1 \neq u_2$.  Otherwise, from the solution of the linear system we would get that,
\begin{eqnarray}
\gamma_n(1) &= e^{A}x_n + \int_0^1Be^{A(1-\tau)}u_n(\tau)d\tau   \\
& = x_n  +\int_0^1Be^{A(1-\tau)}u_n(\tau)d\tau  
\end{eqnarray}
for $n=0,1$. Since, $\gamma_n(1) =y_n$, we would have that
\begin{equation}
y_n -x_n=\int_0^1Be^{A(1-\tau)}u_n(\tau)d\tau 
\end{equation}
for $n=0,1$. Thus, contradicting with the assumption that $z_0 \neq z_1$.

Therefore, from \eqref{eq:cvtest} we can conclude that
\begin{eqnarray}
& &\int_0^1L(\gamma_\alpha (t), u_\alpha (t) )dt  \nonumber  \\ 
<& & \int_0^1 [\alpha L(\gamma_1 (t), u_1 (t) ) + (1- \alpha) L(\gamma_2 (t), u_2 (t) )]dt 
\end{eqnarray}
for all $\alpha \in (0,1)$.
 Let $(\hat{\gamma}_{\alpha}, \hat{u}_\alpha) \in \Gamma \times \mathcal{U} $ be the optimal solution of problem \eqref{eq:occost}-\eqref{eq:edp} for  $(x,y) = ((1-\alpha) x_0+\alpha x_1,(1-\alpha) y_0+\alpha y_1)$.
This implies,
\begin{eqnarray}
h(\alpha z_0 +(1-\alpha)z_1) &=& c(\alpha x_0 +(1-\alpha)x_1,\alpha x_0 +(1-\alpha)x_1) \nonumber \\ &=&   \int_0^1L(\hat{\gamma}_\alpha (t), \hat{u}_\alpha (t) )dt \nonumber \\ &  <&  \int_0^1L(\gamma_\alpha (t), u_\alpha (t) )dt  \nonumber \\ &=& \alpha h(z_0) +(1-\alpha)h(z_1)\nonumber
\end{eqnarray} 
This concludes the proof.

\end{proof}

Now we are ready to state our main on existence of solution to the Monge problem.
\begin{theorem}
\label{thm:mr}
Let $R:E \rightarrow \R^p$ be an invertible linear map, where $p$ is the dimension of the set $E$. Given Assumption \ref{asmp:asmp1}, and $\mu,\nu \in \mathcal{P}^{eq}_2(\Rd)$ are such that $R_{\#} \mu, R_{\#} \nu \in \Pm(\R^p)$ are absolutely continuous with respect to the Lebesgue measure on $\R^p$ and have compact supports. Then there exists a unique solution $T$ to the optimal transport problem.  
 \end{theorem}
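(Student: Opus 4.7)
The plan is to reduce the problem from $\mathbb{R}^d$ to the lower-dimensional Euclidean space $\mathbb{R}^p$ by pushing forward the measures through $R$, and then apply the classical existence result (Theorem \ref{thm:clot}) there. The key prior ingredient is Proposition \ref{prop:cvx}, which tells us that on $E$ the cost has the translation-invariant form $c(x,y)=h(x-y)$ for a strictly convex function $h:E\to\mathbb{R}$. Since $E$ is a linear subspace and $\mu,\nu$ are supported in $E$, this is exactly the structure needed for the Brenier--McCann type theorem after a linear change of variables.

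First, I would introduce $\hat\mu = R_\#\mu$ and $\hat\nu = R_\#\nu$, which by hypothesis are absolutely continuous measures on $\mathbb{R}^p$ with compact support. Next, I would define the reduced cost $\tilde c:\mathbb{R}^p\times\mathbb{R}^p \to \mathbb{R}$ by $\tilde c(\xi,\eta) = c(R^{-1}\xi, R^{-1}\eta)$ and show that it has the form $\tilde c(\xi,\eta) = \tilde h(\xi-\eta)$ where $\tilde h(\zeta) := h(R^{-1}\zeta)$. Since $R^{-1}:\mathbb{R}^p \to E$ is a linear isomorphism and $h$ is strictly convex on $E$ by Proposition \ref{prop:cvx}, the composition $\tilde h$ is strictly convex on $\mathbb{R}^p$. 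Hence $\tilde c$ satisfies the hypothesis of Theorem \ref{thm:clot}, which yields a unique optimal transport map $S:\mathbb{R}^p\to\mathbb{R}^p$ with $S_\#\hat\mu = \hat\nu$.

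I would then lift $S$ back to $E$ by setting $T := R^{-1}\circ S \circ R$. The pushforward identity $T_\#\mu = \nu$ follows from the functoriality of pushforward under composition, together with $R_\#\mu=\hat\mu$ and $S_\#\hat\mu=\hat\nu$. To see optimality, I would use the change-of-variables identity
\begin{equation*}
\int_{\mathbb{R}^d} c(x,T(x))\,d\mu(x) = \int_{\mathbb{R}^p} \tilde c(\xi,S(\xi))\,d\hat\mu(\xi),
\end{equation*}
which holds for any Borel map $T:E\to E$ with corresponding $S = R\circ T\circ R^{-1}$ on $\mathbb{R}^p$, thereby establishing a cost-preserving bijection between admissible maps for the original problem and for the reduced problem. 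Uniqueness of $T$ on $\operatorname{supp}\mu$ then transfers directly from the uniqueness of $S$ on $\operatorname{supp}\hat\mu$.

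The main obstacle, and the only nontrivial step, is verifying that $\tilde h$ inherits strict convexity cleanly from $h$ despite $h$ being defined only on the subspace $E$; this is where the linearity and invertibility of $R$ are essential, and one must also check that the domain condition makes sense (i.e., differences $x-y$ for $x,y\in E$ lie in $E$ since $E$ is a linear subspace containing the origin, so $h$ is indeed defined on all of $E$ and $\tilde h$ on all of $\mathbb{R}^p$). Everything else is bookkeeping about pushforwards and the change of variables under a linear isomorphism.
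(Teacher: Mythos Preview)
Your proposal is correct and follows essentially the same approach as the paper: both reduce to $\mathbb{R}^p$ via the linear isomorphism $R$, define the induced cost $\tilde c(\xi,\eta)=c(R^{-1}\xi,R^{-1}\eta)$, invoke Proposition~\ref{prop:cvx} to get the strictly convex translation-invariant form, apply Theorem~\ref{thm:clot} on $\mathbb{R}^p$, and then pull the optimal map back by conjugation with $R$. Your write-up is in fact more explicit than the paper's on the points that matter (strict convexity of $\tilde h = h\circ R^{-1}$, the cost-preserving bijection between admissible maps, and the transfer of uniqueness).
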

\begin{proof}
Given a cost function $c$, we can define a cost $\tc(\tx,\ty)$ function on $\R^p$ by $\tc(\tx,\ty) = c(R^{-1}\tx, R^{-1}\ty)$ for all $(\tx,\ty)$. Then solving the optimal transport problem \eqref{eq:occost} is equivalent to solving the optimal transport problem 
\begin{eqnarray}
\inf_{T} \int_{x \in \mathbb{R}^d} \tilde{c}(x,T(x))dx \\
s.t. ~~ T_{\#} \mu = \nu
\end{eqnarray}
Given a solution $\tT^*_o$ to the above problem it can be verified that $T^*_o = R^{-1}\tT^*_o(Rx)$ for $\tilde{\mu} = R_{\#}\mu$ and $\tilde{\nu} = R_{\#}\nu$ is the optimal solution for the optimal transport problem. Using the definition of the pushforward map, we have that
\begin{align*}
\int_{\R^d}c(x,T_o(x))d\mu(x) & =\int_{\R^d}c(R^{-1}\tx,T_o(R^{-1}\tx))d\tilde{\mu}(\tx)   \\ 
  \\
&=\int_{\R^d}\tc(\tx,RT_o(R^{-1}\tx))d\tilde{\mu}(\tx)\\
& =\int_{\R^d}\tc(\tx,\tT_o(\tx))d\tilde{\mu}(\tx)
\end{align*}
    The result follows from the above observation and a direct application of Proposition \ref{prop:cvx} and Theorem \ref{thm:clot}.
\end{proof}

\section{Examples}
\label{sec:eg}
In this section, we consider important example cases where one can compute the transport map using existing numerical methods, despite the fact that the state space dimension of the system is greater than $3$.

\textbf{Minimum Energy Costs} Let $A \in \mathbb{R}^{d \times d}$ be a matrix such that the equilibrium set is given by $E =  \{ x \in \mathbb{R}^d ; \exists c \in \mathbb{R}, ~ x = [0 ~...~ c]^T \}$.
Then we consider the cost $c :\mathbb{R} \times \mathbb{R} \rightarrow \mathbb{R}$ given by
\begin{eqnarray}
c(x,y) = \inf_{(\gamma,u) \in \Gamma \times \mathcal{U} }\int_0^1 u^2(t)dt 
\end{eqnarray}
subject to 
\begin{eqnarray}
& 
\dot{\gamma}
=A
\gamma(t)
+
B u(t) \\
&
\gamma(0)
 = \begin{bmatrix}
0 \\ \vdots \\ x
\end{bmatrix}
~~ 
\gamma(1)
= \begin{bmatrix}
0 \\  \vdots \\y
\end{bmatrix}
\end{eqnarray}
Define the controllability Grammian $W \in \mathbb{R}^{d \times d}$.
\[W = \int_0^1 e^{A(1-\tau)}BB^te^{A^T (1-\tau)}d\tau\]
where $\mathbf{x} = [0,...,x]^T$ and $\mathbf{y} = [0,...,y]^T$. 
The cost $c$ is given by 
\begin{eqnarray}
c(x,y) = \langle (\mathbf{y}-e^{At}\mathbf{x}), W^{-1} (\mathbf{y}-e^{At}\mathbf{x}))\rangle \\
 = \langle (\mathbf{y}-\mathbf{x}), W^{-1} (\mathbf{y}-\mathbf{x})\rangle 
\end{eqnarray}
since $\mathbf{x},\mathbf{y} \in E$. Due to the structure of $\mathbf{x}$ and $\mathbf{y}$, we can conclude that
\begin{equation}
\label{eq:gram}
c(x,y) = W^{-1}_{dd}(x-y)^2,
\end{equation}
where $ W^{-1}_{dd}$ denotes $(d,d)^{{\rm th}}$ element of the matrix $W^{-1}$.
Therefore, \textit{the cost $c$ is the same as the squared Euclidean distance on $\mathbb{R}$}. Hence, one can use existing numerical solvers for optimal transport over Euclidean costs, to find the Monge map.

\begin{figure}[b]
\label{fig:IniFin}
\centering
\subfloat[Initial Measure]{
\includegraphics[width=0.17\textwidth]{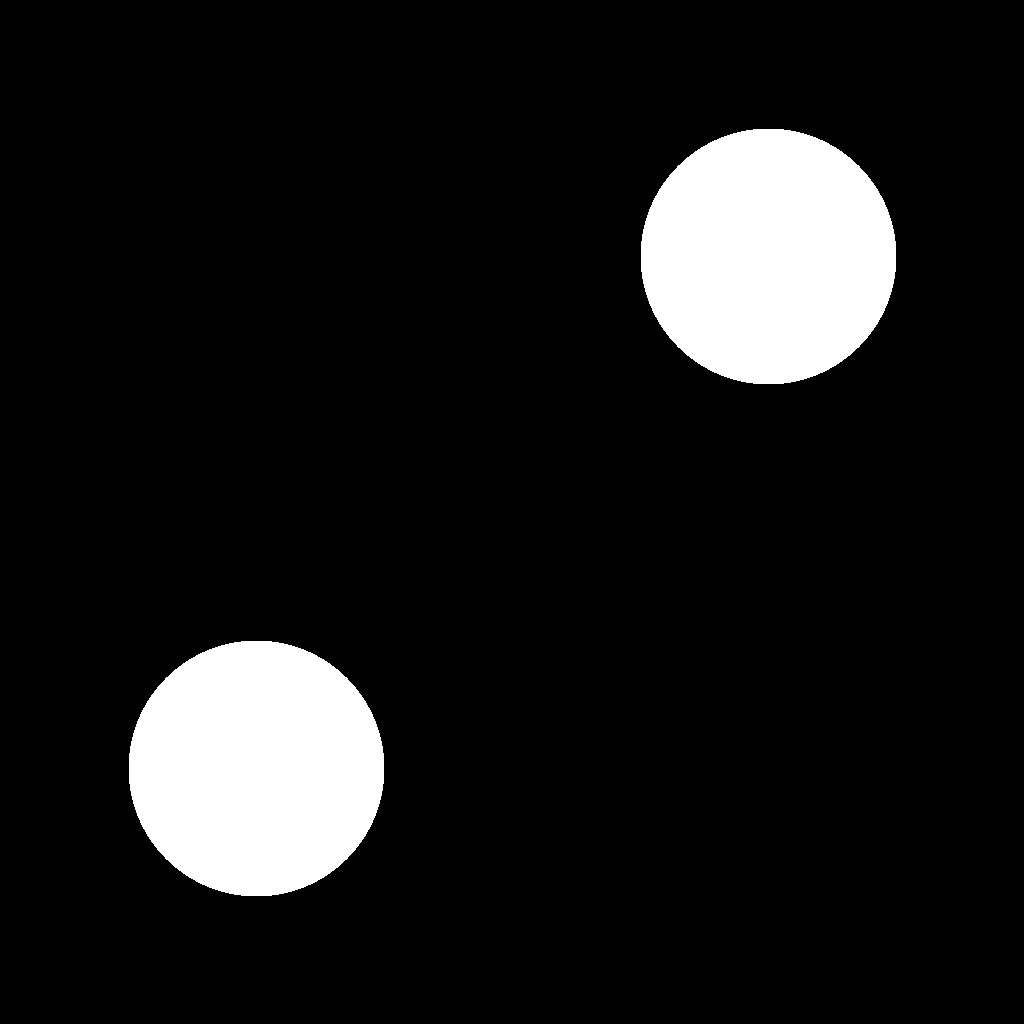}%
}%
\subfloat[Final Measure]{
\includegraphics[width=0.17\textwidth]{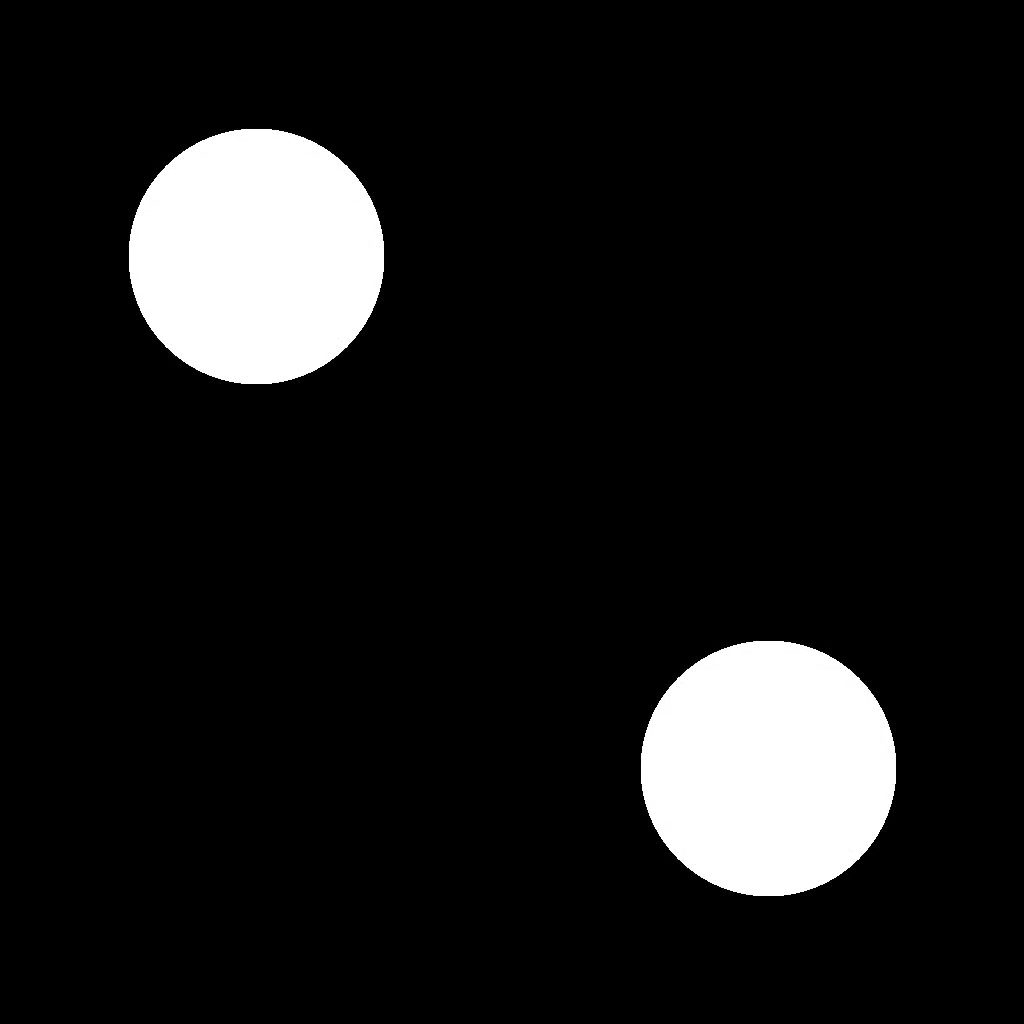}%
}%
\caption{Initial and Final measures for the transport problem.}
\end{figure}

An important special case is the \textbf{double integrator} system as discussed in Section \ref{sec:bkg}
Define 
\begin{equation}
A = \begin{bmatrix}
  0 & 1 \\
  0 & 0 
    \end{bmatrix}
    ~~~
    B =  \begin{bmatrix}
  0  \\
 1
    \end{bmatrix}.
\end{equation}
The cost induced on two dimensional position coordinates $\mathbb{R}^2$ is given  by
\[\tilde{c}(x,y) = \sum_{i=1}^nc(x_i,y_i)\]
In the case of the double integrator system, the position variable is evolving in this case in $n$-dimensional space, whereas the state-space of the system is $2n$- dimensional.
Given the expression \eqref{eq:gram}, $c$ defines the Euclidean distance on $\mathbb{R}^n$. Hence, one can compute the transport map using existing numerical methods for optimal transport. For the two dimensional case, we use the {\it Back and Forth method}, presented in \cite{jacobs2020fast} to compute the optimal transport map, and visualize the optimal trajectories of the particles, in position coordinates. While the cost itself is Eucliedean, the optimal trajectories are not constant velocity, unlike in the Euclidean case. The optimal control in this instance is known to be given by
\begin{equation}
\label{eq:optctrl}
u(t) = -B^*e^{A^*(1-t)}W^{-1}[e^{A}\mathbf{y}-\mathbf{x}]
\end{equation}

The initial measure $\mu$ is taken to be a measure uniformly distributed on two disks of radius $\frac{1}{8}$ centered at $(\frac{1}{4},\frac{3}{4})$ and $(\frac{3}{4},\frac{1}{4})$, respectively. The target measure $\nu$ is the measure uniformly distributed on two disks of radius $\frac{1}{8}$ centered at $(\frac{1}{4},\frac{1}{4})$ and $(\frac{3}{4},\frac{3}{4})$, respectively. The measures have been visualized in Figure \ref{fig:IniFin}.
We visualize the the evolution of the probability measures along optimal solutions of the double integrator computed using the expression in \eqref{eq:optctrl} in Figure \ref{fig:OT}. We have also shown the evolution of the measure along the trajectories of the Euclidean cost. One can see that, in the case of the double integrator, the particles have low initial and final velocity, but are faster than the case for the Euclidean trajectories at the intermediate time instances. Note that the Euclidean cost is the one considered in the classical optimal transport problem, and corresponds to the minimum energy cost single integrator system ($\dot{\gamma} = u$).

\begin{figure}[b]
\centering
\subfloat[$T=\frac{1}{5}$(Euclidean)]{
\includegraphics[width=0.17\textwidth]{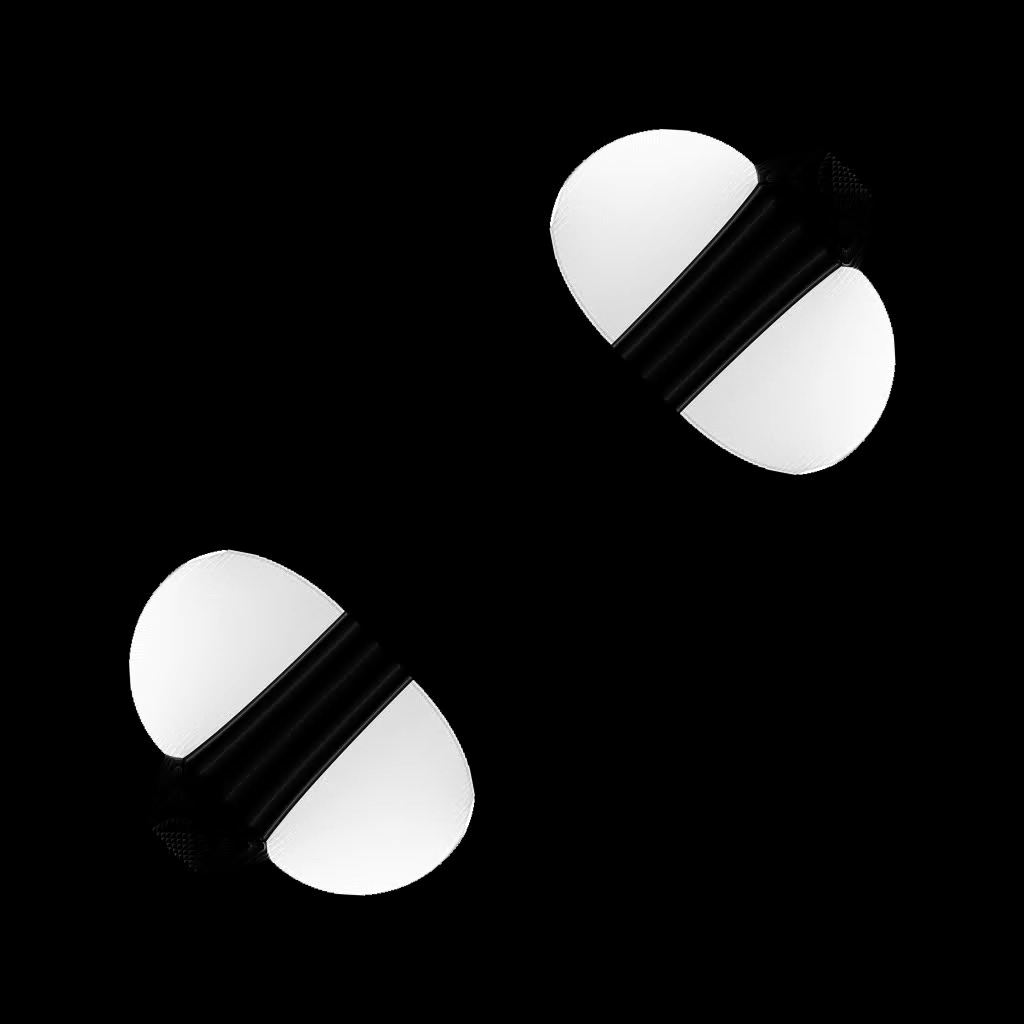}%
}%
\subfloat[$T=\frac{1}{5}$(DI)]{
\includegraphics[width=0.17\textwidth]{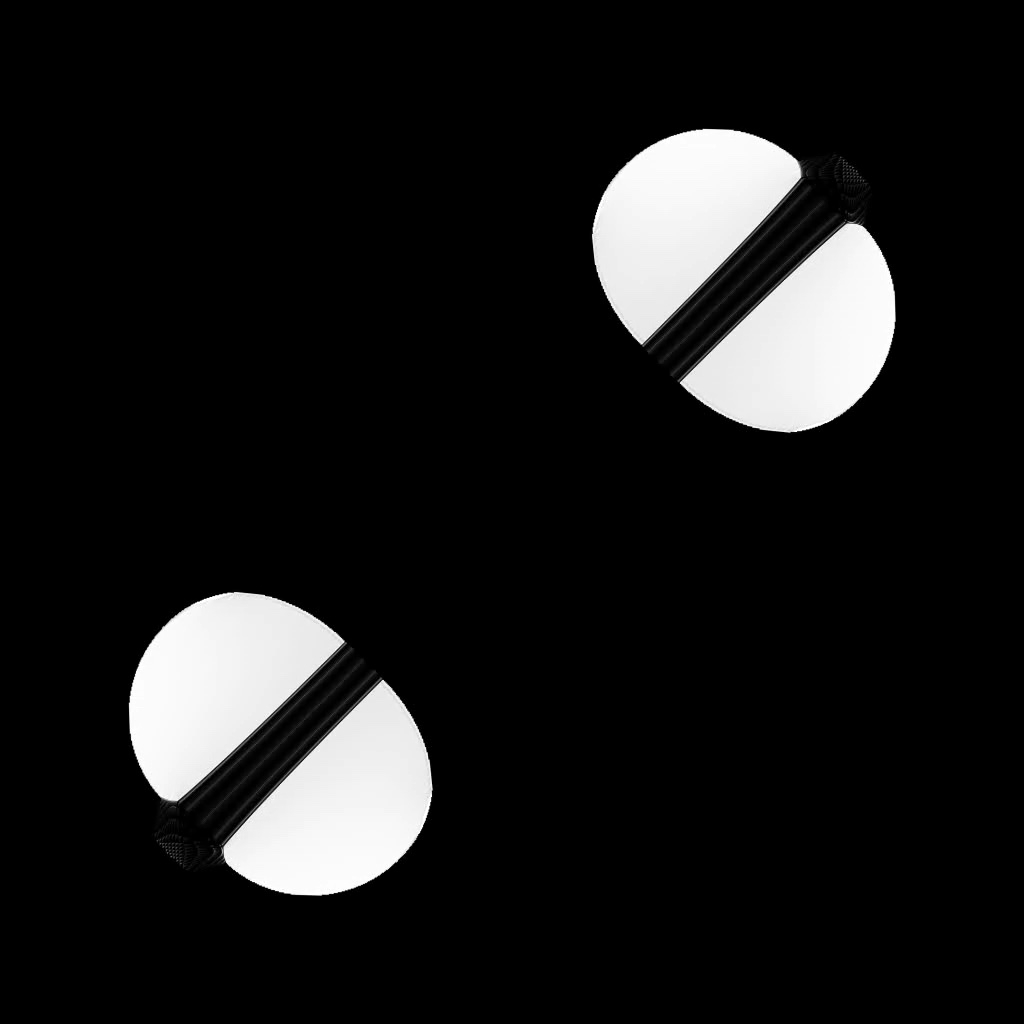}%
}%
\\
\subfloat[$T=\frac{2}{5}$(Euclidean)]{
\includegraphics[width=0.17\textwidth]{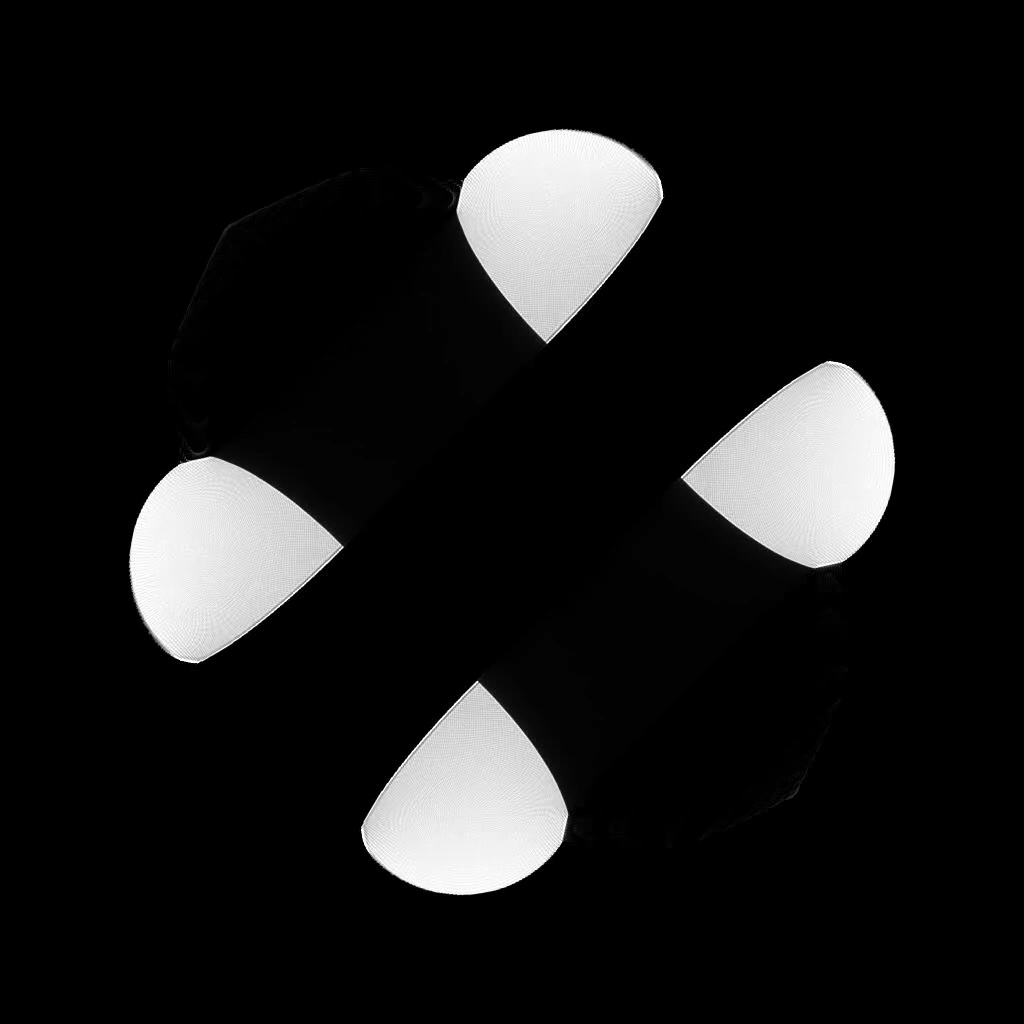}%
}%
\subfloat[$T=\frac{2}{5}$(DI)]{
\includegraphics[width=0.17\textwidth]{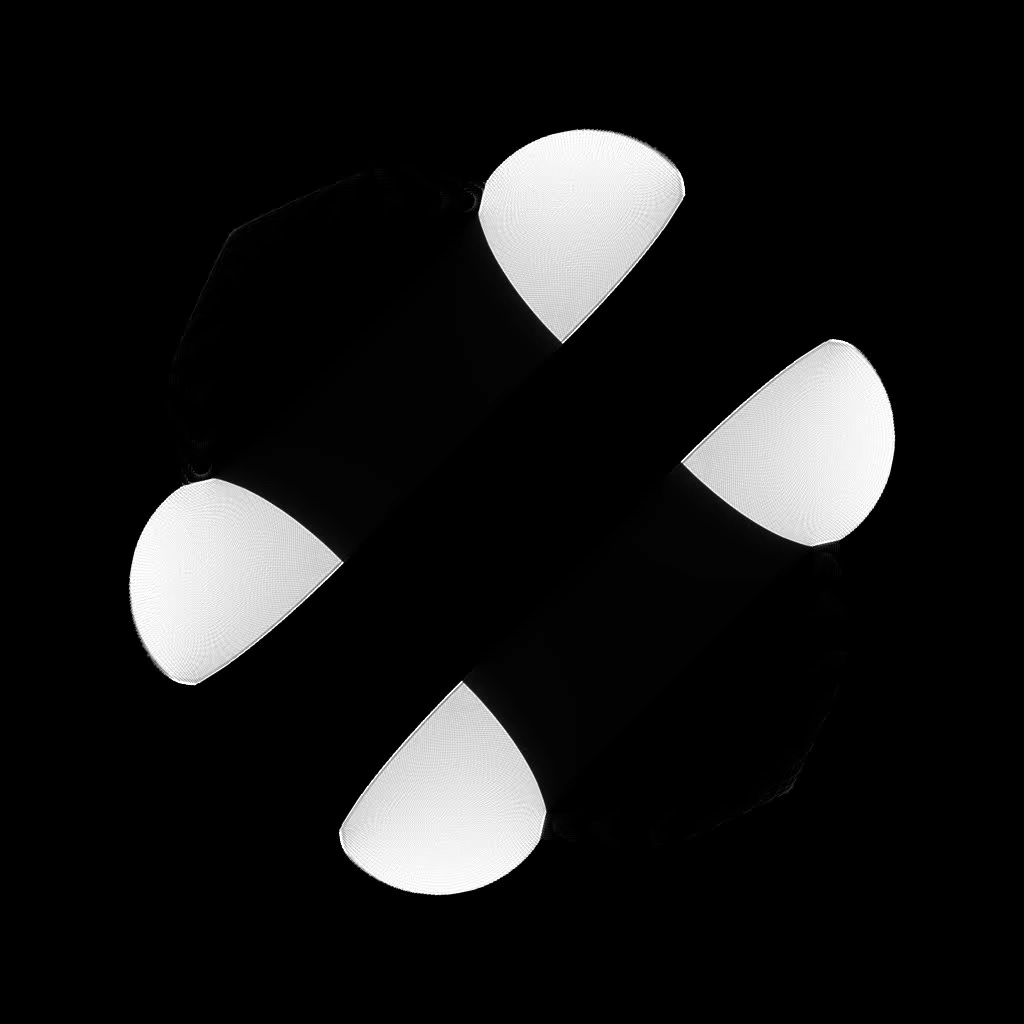}%
}%
\\
\subfloat[$T=\frac{3}{5}$(Euclidean)]{
\includegraphics[width=0.17\textwidth]{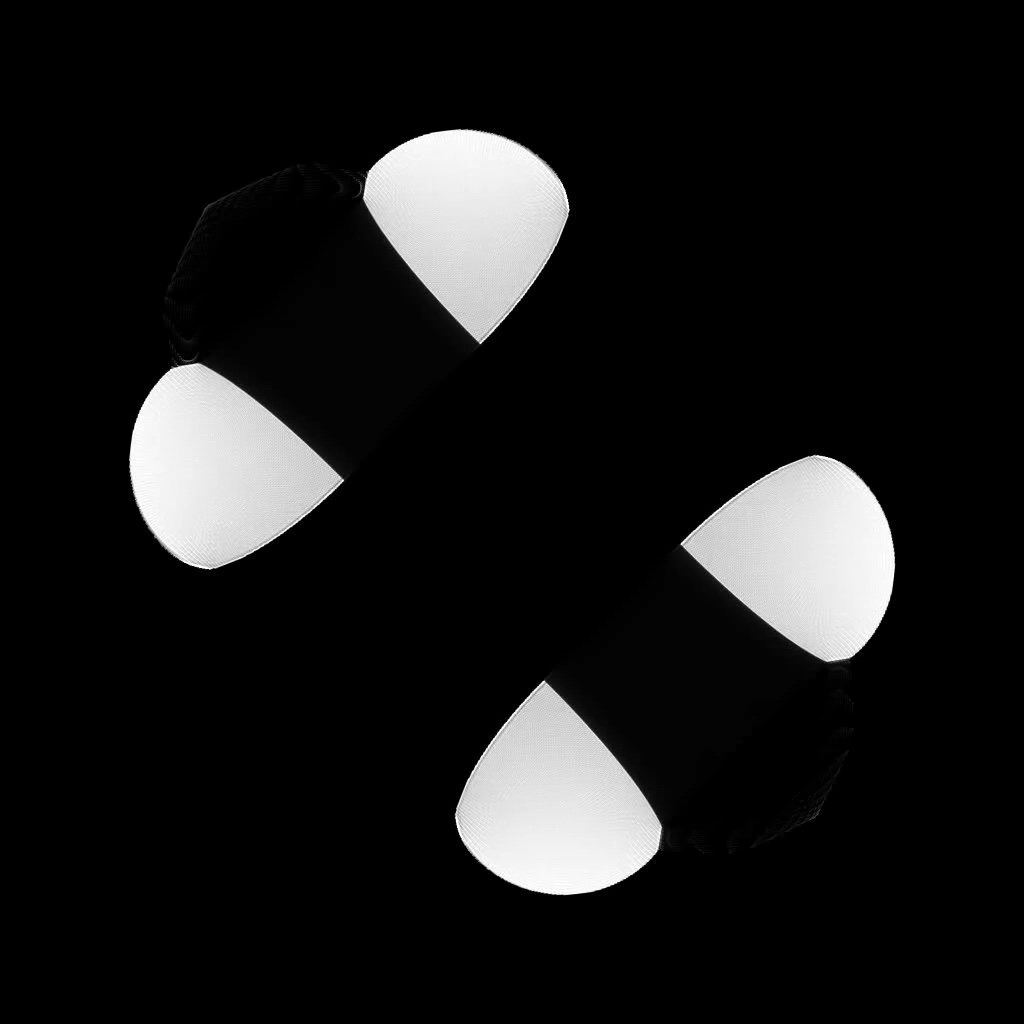}%
}
\subfloat[$T=\frac{3}{5}$(DI)]{
\includegraphics[width=0.17\textwidth]{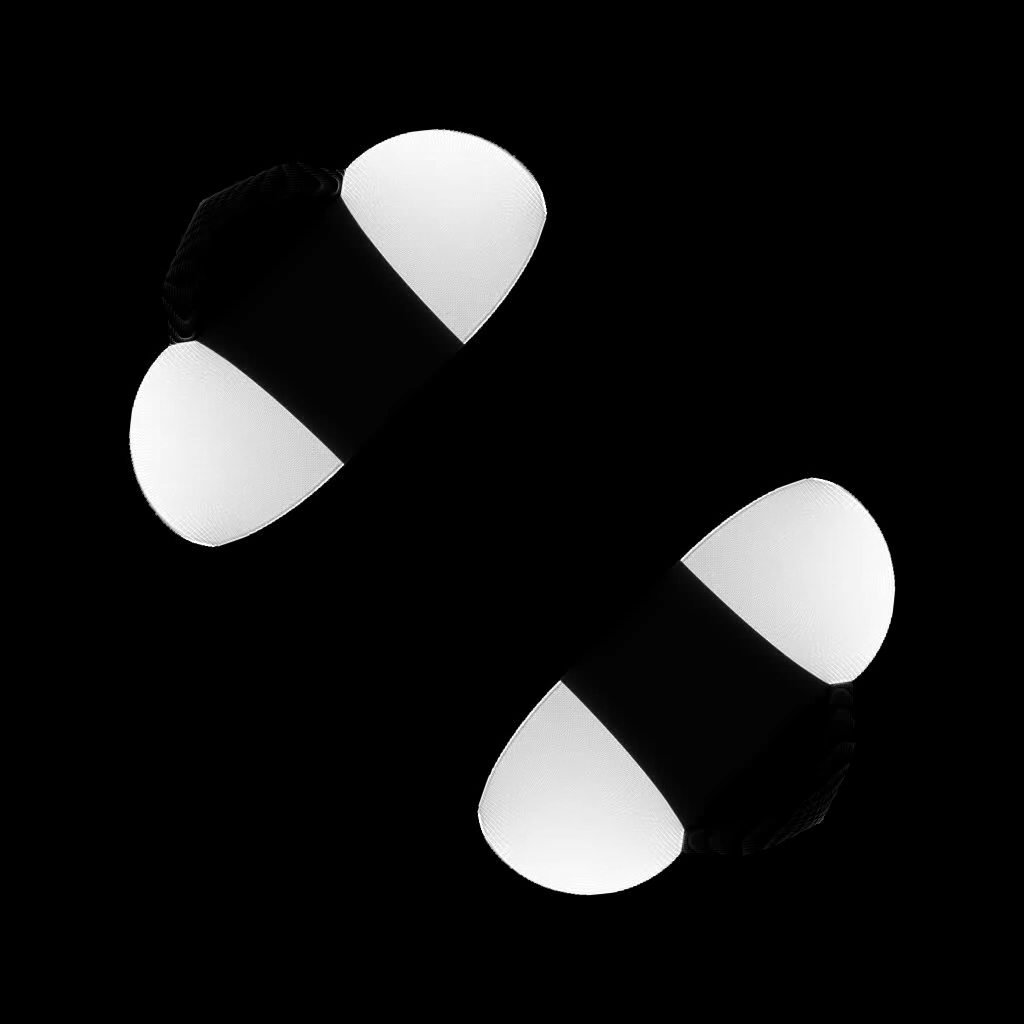}%
}
\\
\subfloat[$T=\frac{4}{5}$(Euclidean)]{
\includegraphics[width=0.17\textwidth]{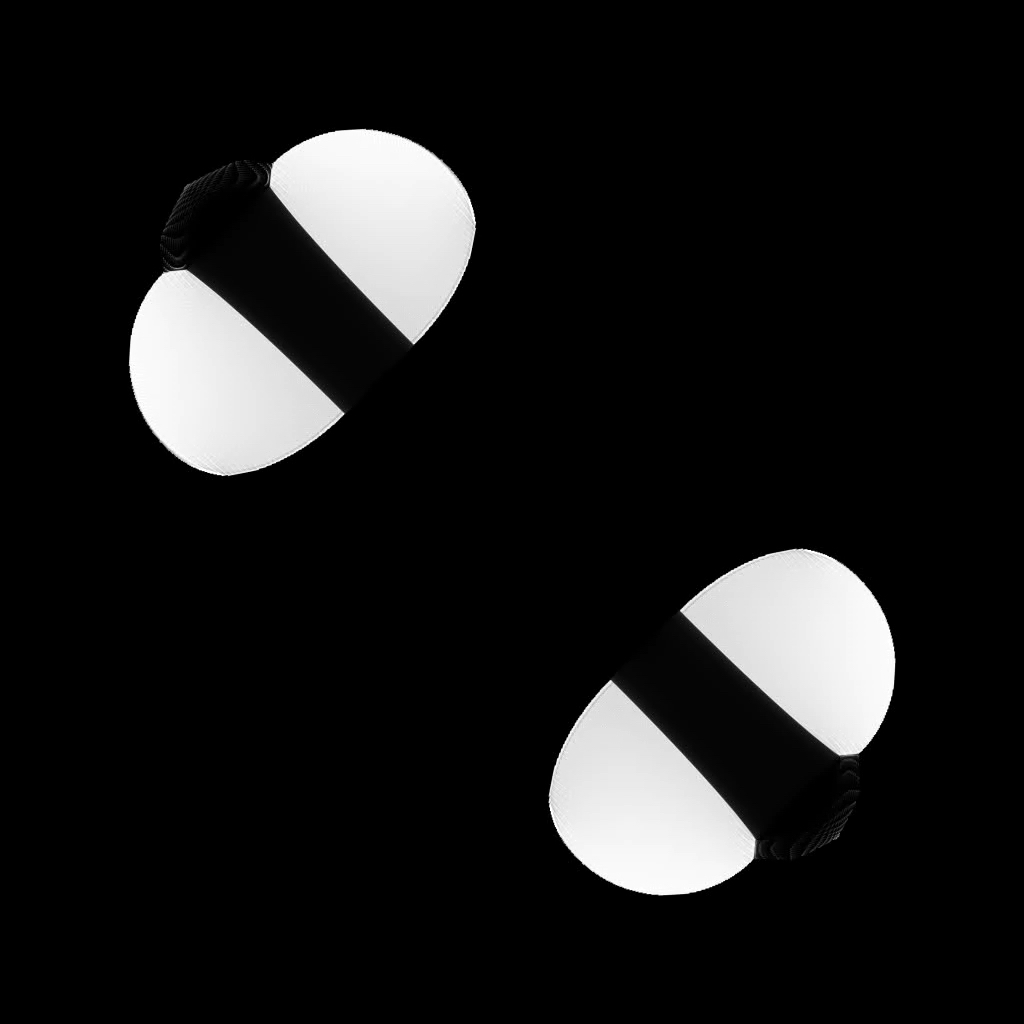}%
}
\subfloat[$T=\frac{4}{5}$(DI)]{
\includegraphics[width=0.17\textwidth]{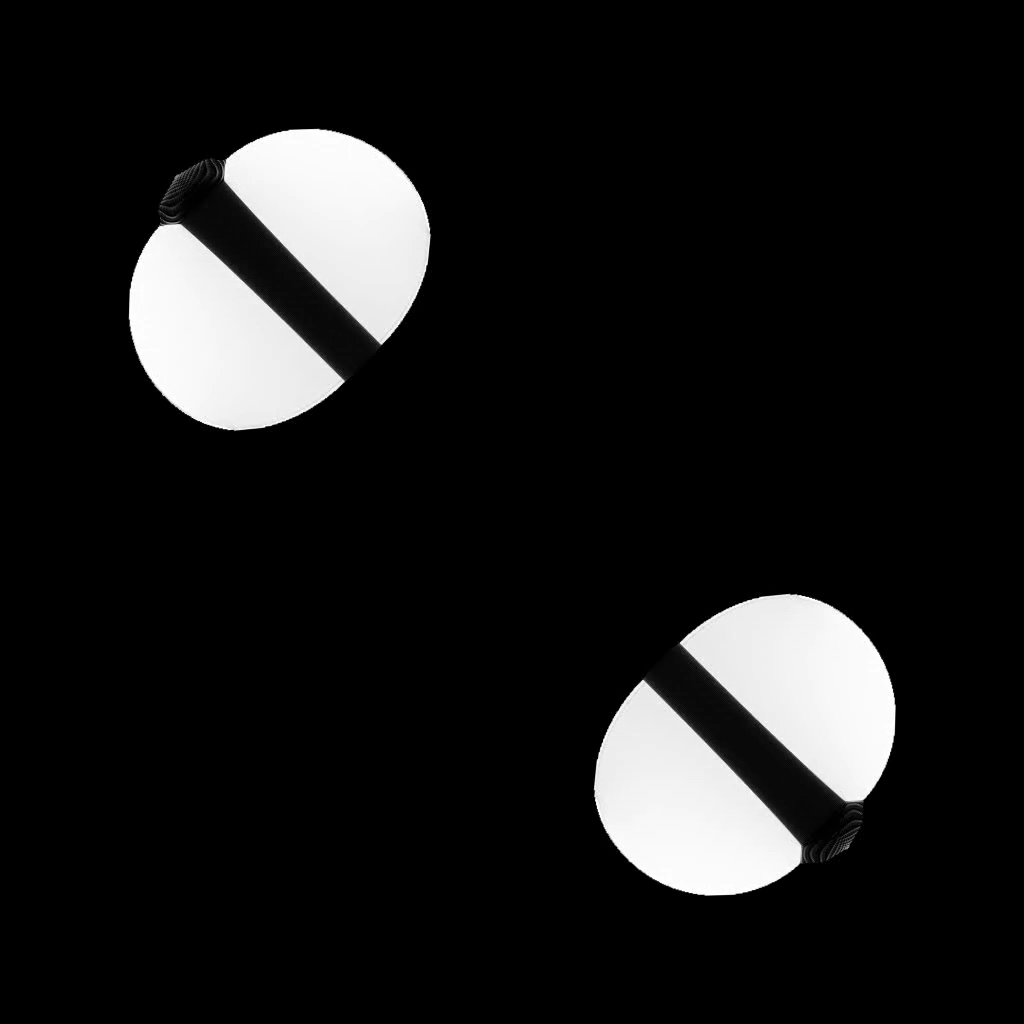}%
}
\caption{Evolution of measure under along optimal trajectories for Eucliean (Single Integrator) cost and Double Integrator cost}
\label{fig:OT}
\end{figure}

\textbf{Linear Quadratic costs} The optimal transport map can also be computed for the more general case of Linear quadratic costs given by,
\begin{eqnarray}
c(x,y) = \inf_{(\gamma,u) \in \Gamma \times \mathcal{U} } \int_0^1 \langle \gamma(t), Q \gamma(t) \rangle dt  + \int_0^1 \langle u(t), R u(t) \rangle dt \nonumber
\end{eqnarray}
subject to 
\begin{eqnarray}
& 
\dot{\gamma}
=A
\gamma(t)
+
B u(t) \\
&
\gamma(0)
 = \begin{bmatrix}
0 \\ \vdots \\ x
\end{bmatrix}
~~ 
\gamma(1)
= \begin{bmatrix}
0 \\  \vdots \\y
\end{bmatrix}
\end{eqnarray}
where $Q \in \mathbb{R}^{d \times d} $ is a positive semi-definite matrix such that $QP^{\perp}_E \mathbf{x} = Q \mathbf{x}$ for all $\mathbf{x} \in \mathbb{R}^d$, and $R \in \mathbb{R}^{d \times d}$ is a positive definite matrix.
In this case, it is known from \cite{hindawi2011mass}[Theorem 2.2] that there exist positive definite matrices $D,F \in \mathbb{R}^{d \times d}$ and a matrix $E \in \mathbb{R}^{d \times d}$ such that 
\[c(x,y) = \langle  \mathbf{x},D \mathbf{x} \rangle -\langle  \mathbf{x}, E \mathbf{y} \rangle +\langle  \mathbf{y}, F \mathbf{y} \rangle \]
where $\mathbf{x} = [0,...,x]^T$ and $\mathbf{y} = [0,...,y]^T$.
Once again, due to the special structure of $\mathbf{x}$ and $\mathbf{y}$, the above equality reduces to 
\[c(x,y) = D_{22}x^2 - E_{22}xy +F_{22}y^2 \]
Hence, the cost is just a quadratic form on $\mathbb{R}$, and one can use existing solvers such as the Back and forth method used for the minimum energy control of the double integrator, to solve a large class of LQ costs where the cost on $\mathbb{R}^d$ is of the form $\tilde{c}(x,y) = \sum_{i=1}^m c(x_i,y_i)$.

\section{Conclusion}
We established the well-posedeness of solutions to the 
optimal transport problem of linear time invariant systems over measures that are supported over the equilibrium set. Interesting future directions include using this idea to consider optimal transport of nonlinear systems such as those considered in \cite{agrachev2009optimal,elamvazhuthi2023dynamical}.                            

\bibliographystyle{plain}        
\bibliography{autosam}           

\begin{thebibliography}{10}

\bibitem{agrachev2009optimal}
Andrei Agrachev and Paul Lee.
\newblock Optimal transportation under nonholonomic constraints.
\newblock {\em Transactions of the American Mathematical Society},
  361(11):6019--6047, 2009.

\bibitem{balci2023covariance}
Isin~M Balci and Efstathios Bakolas.
\newblock Covariance steering of discrete-time linear systems with mixed
  multiplicative and additive noise.
\newblock In {\em 2023 American Control Conference (ACC)}, pages 2586--2591.
  IEEE, 2023.

\bibitem{biswal2021decentralized}
Shiba Biswal, Karthik Elamvazhuthi, and Spring Berman.
\newblock Decentralized control of multiagent systems using local density
  feedback.
\newblock {\em IEEE Transactions on Automatic Control}, 67(8):3920--3932, 2021.

\bibitem{chen2016relation}
Yongxin Chen, Tryphon~T Georgiou, and Michele Pavon.
\newblock On the relation between optimal transport and schr{\"o}dinger
  bridges: A stochastic control viewpoint.
\newblock {\em Journal of Optimization Theory and Applications}, 169:671--691,
  2016.

\bibitem{chen2016optimal}
Yongxin Chen, Tryphon~T Georgiou, and Michele Pavon.
\newblock Optimal transport over a linear dynamical system.
\newblock {\em IEEE Transactions on Automatic Control}, 62(5):2137--2152, 2016.

\bibitem{de2021discrete}
Mathias~Hudoba De~Badyn, Erik Miehling, Dylan Janak, Beh{\c{c}}et
  A{\c{c}}{\i}kme{\c{s}}e, Mehran Mesbahi, Tamer Ba{\c{s}}ar, John Lygeros, and
  Roy~S Smith.
\newblock Discrete-time linear-quadratic regulation via optimal transport.
\newblock In {\em 2021 60th IEEE Conference on Decision and Control (CDC)},
  pages 3060--3065. IEEE, 2021.

\bibitem{elamvazhuthi2018optimal}
Karthik Elamvazhuthi, Piyush Grover, and Spring Berman.
\newblock Optimal transport over deterministic discrete-time nonlinear systems
  using stochastic feedback laws.
\newblock {\em IEEE control systems letters}, 3(1):168--173, 2018.

\bibitem{elamvazhuthi2023dynamical}
Karthik Elamvazhuthi, Siting Liu, Wuchen Li, and Stanley Osher.
\newblock Dynamical optimal transport of nonlinear control-affine systems.
\newblock {\em Journal of Computational Dynamics}, pages 0--0, 2023.

\bibitem{fleig2017optimal}
Arthur Fleig and Roberto Guglielmi.
\newblock Optimal control of the fokker--planck equation with space-dependent
  controls.
\newblock {\em Journal of Optimization Theory and Applications}, 174:408--427,
  2017.

\bibitem{fleming2012deterministic}
Wendell~H Fleming and Raymond~W Rishel.
\newblock {\em Deterministic and stochastic optimal control}, volume~1.
\newblock Springer Science \& Business Media, 2012.

\bibitem{ito2023maximum}
Kaito Ito and Kenji Kashima.
\newblock Maximum entropy optimal density control of discrete-time linear
  systems and schr{\"o}dinger bridges.
\newblock {\em IEEE Transactions on Automatic Control}, 2023.

\bibitem{jacobs2020fast}
Matt Jacobs and Flavien L{\'e}ger.
\newblock A fast approach to optimal transport: The back-and-forth method.
\newblock {\em Numerische Mathematik}, 146(3):513--544, 2020.

\bibitem{kabir2021efficient}
Rabiul~Hasan Kabir and Kooktae Lee.
\newblock Efficient, decentralized, and collaborative multi-robot exploration
  using optimal transport theory.
\newblock In {\em 2021 American Control Conference (ACC)}, pages 4203--4208.
  IEEE, 2021.

\bibitem{karny1996towards}
Miroslav K{\'a}rn{\`y}.
\newblock Towards fully probabilistic control design.
\newblock {\em Automatica}, 32(12):1719--1722, 1996.

\bibitem{kolouri2017optimal}
Soheil Kolouri, Se~Rim Park, Matthew Thorpe, Dejan Slepcev, and Gustavo~K
  Rohde.
\newblock Optimal mass transport: Signal processing and machine-learning
  applications.
\newblock {\em IEEE signal processing magazine}, 34(4):43--59, 2017.

\bibitem{okamoto2018optimal}
Kazuhide Okamoto, Maxim Goldshtein, and Panagiotis Tsiotras.
\newblock Optimal covariance control for stochastic systems under chance
  constraints.
\newblock {\em IEEE Control Systems Letters}, 2(2):266--271, 2018.

\bibitem{pakniyat2022convex}
Ali Pakniyat.
\newblock A convex duality approach for assigning probability distributions to
  the state of nonlinear stochastic systems.
\newblock {\em IEEE Control Systems Letters}, 6:3080--3085, 2022.

\bibitem{hindawi2011mass}
J-B Pomet.
\newblock Mass transportation with lq cost functions.
\newblock {\em Acta applicandae mathematicae}, 113:215--229, 2011.

\bibitem{rubner2000earth}
Yossi Rubner, Carlo Tomasi, and Leonidas~J Guibas.
\newblock The earth mover's distance as a metric for image retrieval.
\newblock {\em International journal of computer vision}, 40:99--121, 2000.

\bibitem{santambrogio2015optimal}
Filippo Santambrogio.
\newblock Optimal transport for applied mathematicians.
\newblock {\em Birk{\"a}user, NY}, 55(58-63):94, 2015.

\bibitem{villani2009optimal}
C{\'e}dric Villani et~al.
\newblock {\em Optimal transport: old and new}, volume 338.
\newblock Springer, 2009.

\end{thebibliography}



\end{document}